\definecolor{darkred}{RGB}{139,0,0}
\definecolor{darkgreen}{RGB}{0,100,0}
\definecolor{darkmagenta}{RGB}{139,0,139}
\definecolor{darkorange}{RGB}{190,70,20}
\newtheorem{theorem}{Theorem}
\newtheorem{lemma}[theorem]{Lemma}
\newtheorem{corollary}[theorem]{Corollary}
\newtheorem{proposition}[theorem]{Proposition}
\theoremstyle{definition}
\newtheorem{remark}{Remark}
\newcommand{\NN}{\mathbb{N}}
\newcommand{\RR}{\mathbb{R}}
\newcommand{\bsalpha}{\boldsymbol{\alpha}}
\newcommand{\bsx}{\boldsymbol{x}}
\newcommand{\bsy}{\boldsymbol{y}}
\newcommand{\bsz}{\boldsymbol{z}}
\newcommand{\bsb}{\boldsymbol{b}}
\newcommand{\cP}{\mathcal{P}}
\newcommand{\vol}{{\rm Vol}}
\newcommand{\diam}{\mathrm{diam}}
\newcommand{\dist}{\mathrm{dist}}
\title{On the relation of the spectral test to isotropic discrepancy and $L_q$-approximation in Sobolev spaces}
\author{Mathias Sonnleitner and Friedrich Pillichshammer\thanks{The authors are supported by the Austrian Science Fund (FWF): Project F5509-N26 (Pillichshammer) and projects F5513-N26 and P32405 (Sonnleitner)
}}
\date{}
\begin{document}

\maketitle

\begin{abstract}
This paper is a follow-up to the recent paper  ``A note on isotropic discrepancy and spectral test of lattice point sets'' [J. Complexity, 58:101441, 2020].  We show that the isotropic discrepancy of a lattice point set is at most $d \, 2^{2(d+1)}$ times its spectral test, thereby correcting the dependence on the dimension $d$ and an inaccuracy in the proof of the upper bound in Theorem~2 of the mentioned paper. The major task is to bound the volume of the neighbourhood of the boundary of a convex set contained in the unit cube. 

Further, we characterize averages of the distance to a lattice point set in terms of the spectral test. As an application, we infer that the spectral test -- and with it the isotropic discrepancy -- is crucial for the suitability of the lattice point set for the approximation of Sobolev functions.
\end{abstract}

\centerline{\begin{minipage}[hc]{130mm}{
{\em Keywords:}  integration lattice,
spectral test, isotropic discrepancy, approximation in Sobolev spaces, worst-case error\\
{\em MSC 2020:} 11K38, 52A39, 41A25}
\end{minipage}}

\section{Introduction and statement of the results}

In our recent paper \cite{PS20} we exhibited a close connection between the isotropic discrepancy and the spectral test of lattice point sets in the $d$-dimensional unit cube $[0,1)^d$. For the definition of these well-established notions we refer to Hellekalek \cite{H98} as well as \cite{PS20}. The central result is \cite[Theorem~2]{PS20} which states that the isotropic discrepancy $J_N$ of an $N$-element lattice point set $\cP(L)$ in $[0,1)^d$ and the spectral test $\sigma(L)$ are -- up to multiplicative factors only depending on the dimension $d$ -- equivalent, i.e., we have $J_N(\cP(L)) \asymp_d \sigma(L)$. Possible choices for the involved multiplicative factors are stated explicitly. (The asymptotic notation $A(N)\asymp B(N)$ means that there exist numbers $0<c<C$ such that $c A(N)\le B(N)\le C A(N)$ for all $N\in \mathbb{N}$. Furthermore, $\asymp_d$ indicates that the numbers $c$ and $C$ may only depend on $d$.) 

Unfortunately, it just turned out that the proof of the corresponding upper bound is flawed since the employed argument of Aistleitner et al., used in the proof of \cite[Lemma~17]{ABD12}, to estimate the discrepancy of a single set in terms of the number of cells intersecting its boundary, was incorrectly extended to higher dimensions in \cite[page~5, lines~19-21]{PS20},
although the asymptotic result itself remains valid. 

That kind of argument, which boils down to bounding the measure of the neighbourhood of certain sets, is very useful in discrepancy theory and appears often in the literature, even though sometimes superficially (especially in high dimensions). This is true in particular in the context of jittered or stratified sampling which can be used to derive upper bounds on the discrepancy with respect to various set systems, see, e.g., the book of Beck and Chen \cite[Chapter~8]{BC08}. There, in equation (3), an estimate on the number of intersecting cells was given, however with a hidden constant. Confer also Drmota and Tichy \cite[Section~2.1.2]{DT97} for another presentation. In \cite[Section~8]{BCC+19}, Brandolini et al.\;extend this method to metric measure spaces, a key hypothesis being a bound on the measure of the neighbourhood of the boundary of the involved sets. Similar to Corollary~8.3 there, we shall derive such a bound for convex sets and give an explicit constant (see Corollary~\ref{co1} in Section~\ref{sec3}) with the difference being that we do not intersect convex sets with $[0,1]^d$.

Using a slightly different setup than \cite{PS20}, Aistleitner et al.\;considered translates of fundamental cells which they intersected with the unit square. However, also their approach requires some modification since in \cite{ABD12} the last equality on page 1006 and the first on page 1007 are not correct. This can be repaired if one replaces $[0,1]^2$ by $\RR^2$ in the definiton of the unit cells. To obtain the necessary volume estimates, one could look into, e.g., the proof of Lemma~\ref{le3} below to see that \cite[Lemma~17]{ABD12} holds if $4$ is replaced by $4+\sqrt{8}\pi$ (which can further be replaced by 5, as the isotropic discrepancy is in any case at most 1).\\

The aim of this note is twofold: First, we aim at working out the details for a correct adjustment of the used argument to higher dimensions, thereby correcting the dimension dependence of the upper bound in \cite[Theorem~2]{PS20} as follows.

\begin{theorem}\label{thm1}
Let $\mathcal{P}(L)$ be an $N$-element lattice point set in $[0,1)^d$. Then we have 
\[
J_N(\mathcal{P}(L))\le d \,2^{2(d+1)}\,\sigma(L)\qquad (\text{instead of }d^2\,2^d\,\sigma(L)).
\]
\end{theorem}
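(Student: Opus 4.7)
The strategy is the classical \emph{intersecting cells} argument that goes back to Beck--Chen and was used in \cite{PS20,ABD12}; the task is to implement it \emph{quantitatively} in arbitrary dimension. The plan splits into three steps.

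First, I would fix a fundamental domain $F$ of $L$ whose diameter $\delta:=\diam(F)$ is controlled by $\sigma(L)$. Concretely, starting from a sufficiently reduced basis of $L$ (e.g.\ a Minkowski-reduced basis), the longest basis vector has length comparable to the covering radius, and hence to $\sigma(L)$; the diameter of the associated fundamental parallelepiped is at most the sum of the basis norms times a dimensional factor. Keeping track of the constants carefully, I would aim for an estimate of the shape $\delta \le 2^{d+1}\sigma(L)$, which contributes one of the two factors of $2^{d+1}$ in the final bound.

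Second, I would carry out the cell-counting reduction. The translates $F+\ell$, $\ell\in L$, tile $\RR^d$, and the $N$ points of $\cP(L)$ sit in bijection with the cells whose representative lies in $[0,1)^d$. For a fixed convex $C\subseteq[0,1)^d$, each cell is either entirely inside $C$, entirely outside $C$, or meets $\partial C$; only the last kind contributes to the discrepancy. Because any such cell is contained in the $\delta$-neighbourhood $(\partial C)_\delta:=\{x\in\RR^d:\dist(x,\partial C)\le \delta\}$, their number is at most $\vol((\partial C)_\delta)/\vol(F) = N\cdot \vol((\partial C)_\delta)$. Dividing by $N$ and taking the supremum over $C$ yields
\[
J_N(\cP(L)) \;\le\; \sup_{C\subseteq[0,1)^d\text{ convex}} \vol\bigl((\partial C)_\delta\bigr).
\]
This is the step where the proof in \cite{PS20} was inaccurate; I would be careful to verify this inequality cleanly in general dimension, in particular that cells sticking out of $[0,1)^d$ are handled via periodicity without loss.

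Third, and this is the substantive technical step, I would prove the promised upper bound
\[
\vol\bigl((\partial C)_\delta\bigr) \;\le\; c(d)\,\delta \qquad\text{for convex } C\subseteq[0,1)^d,
\]
with explicit $c(d)$; this is precisely Corollary~\ref{co1} advertised in the introduction. The natural tool is Steiner's formula $\vol(C+\delta B_2^d)=\sum_{k=0}^d\binom{d}{k}W_k(C)\delta^k$, which handles the \emph{outer} parallel set, and a convexity-based argument for the inner parallel set $\{x\in C:\dist(x,\partial C)\le\delta\}$. Since $C\subseteq[0,1]^d$, the quermassintegrals $W_k(C)$ are bounded by those of the unit cube, giving dimensional constants of size $\binom{d}{k}$. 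Summing the Steiner expansion over the range $\delta\in(0,1]$ and trading the higher-order terms $\delta^k$ against $\delta$ should yield a constant of order $d\cdot 2^{d+1}$.

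Combining the three steps,
\[
J_N(\cP(L)) \;\le\; c(d)\,\delta \;\le\; d\,2^{d+1}\cdot 2^{d+1}\sigma(L) \;=\; d\,2^{2(d+1)}\,\sigma(L),
\]
as required. The main obstacle I anticipate is Step~3: Steiner's formula is clean only for the outer thickening, and the inner thickening of a convex body touching $\partial[0,1]^d$ must be handled separately; additionally, obtaining the explicit constant $d\,2^{d+1}$ uniformly in $\delta\in(0,1]$ requires balancing the higher-order terms in the Steiner expansion rather than merely using the linear term, so this is where the bulk of the technical work will lie.
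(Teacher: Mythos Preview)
Your plan is essentially the paper's: control the diameter of a reduced fundamental cell, reduce the discrepancy to the volume of the $\delta$-neighbourhood of $\partial C$, and bound the latter via Steiner's formula. Two remarks on the execution. First, the factor $d$ naturally lives in Step~1, not Step~3: following \cite{PS20} the paper uses an LLL-reduced basis to obtain $\diam(P)\le d\,2^{d-1}\sigma(L)$ (the $d$ comes from summing $d$ basis vectors), while the Steiner estimate yields $\vol(K_\rho^+)\le 2^{d+3}\rho$ with no $d$; your target $\delta\le 2^{d+1}\sigma(L)$ via Minkowski reduction, without any $d$, is likely too optimistic, but the product of the two constants is what matters and that is unchanged. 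Second, for the inner thickening the paper does not argue directly: it proves the comparison $\vol(K_\rho^-)\le\vol(K_\rho^+)$ by combining Hadwiger's differentiability result $\frac{d}{d\rho}\vol(K_\rho)=d\,W_1(K_\rho)$ with the monotonicity of $W_1$ under inclusion and the mean value theorem---this is the clean replacement for the unspecified ``convexity-based argument'' you flag as the main obstacle.
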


The second aim is to show a close relation between the spectral test of a lattice point set and the worst-case error of $L_q$-approximation for functions from Sobolev spaces. For this, we need to introduce some relevant concepts.

Let $p\in [1,\infty]$ and $s>d/p$ be an integer. Then the Sobolev space $W^s_p(\RR^d)$ is the collection of all $L_p$-functions for which the expression 
\[
\|f\|_{W^s_p(\RR^d)}:=\Big(\sum_{|\bsalpha|\le s} \|D^{\bsalpha}f\|_{L_p(\RR^d)}^p\Big)^{1/p}
\]
is finite, where the summation is extended over all multi-indices $\bsalpha=(\alpha_1,\ldots,\alpha_d) \in \mathbb{N}_0^d$, for which $|\bsalpha|=\alpha_1+\cdots+\alpha_d$ is less or equal the so-called smoothness parameter $s$. The notation $D^{\bsalpha} f$ indicates the weak mixed partial derivatives of $f$ of order $\alpha_i$ in coordinate $x_i$ for $i \in \{1,\ldots,d\}$, i.e., $$D^{\bsalpha} f := \frac{\partial^{|\bsalpha|}f}{\partial^{\alpha_1} x_1 \ldots \partial^{\alpha_d} x_d}.$$ 

For every function in $W^s_p(\RR^d)$ we find a continuous representative since $s>d/p$. On the cube $[0,1)^d$ we define $W^s_p([0,1)^d)$ by collecting all functions $f:[0,1)^d\to\RR$ for which $f(x)=g(x)$ for all $x\in [0,1)^d$ and some $g\in W^s_p(\RR^d)$. Taking the infimum over all such $g$, we define $$\|f\|_{W^s_p([0,1)^d)}:=\inf \|g\|_{W^s_p(\RR^d)}.$$

Let $\cP=\{\bsx_0,\ldots,\bsx_{N-1}\}\subseteq [0,1)^d$ be a finite point set, which may be an $N$-element lattice point set. Let $q\in [1,\infty]$. We consider algorithms $A_{\cP,\varphi}$ taking $f\in W^s_p([0,1)^d)$ to
\begin{equation}\label{eq:algo}
A_{\cP,\varphi}(f)=\varphi\bigl(f(\bsx_0),\ldots,f(\bsx_{N-1})\bigr),\quad \text{where } \varphi:\mathbb{R}^N\to L_q([0,1)^d),
\end{equation}
which approximate the continuous embedding $W^s_p([0,1)^d)\hookrightarrow L_q([0,1)^d)$ well in the sense of having a small worst-case error
\[
e(A_{\cP,\varphi},d,s,p,q):=\sup_{\|f\|_{W^s_p([0,1)^d)}\le 1} \|f-A_{\cP,\varphi}(f)\|_{L_q([0,1)^d)}.
\]
The best we can do with the given point set $\cP$ is measured by
\[ 
e(\cP,d,s,p,q):=\inf_{\varphi} e(A_{\cP,\varphi},d,s,p,q),
\]
which is the minimal error over all algorithms of the form \eqref{eq:algo}. It is known, see, e.g., Novak and Triebel \cite{NT06} or Novak and Wo\'zniakowski \cite[Remark 4.43]{NW08}, that the minimal error we can achieve using any $N$ points satisfies
\[
e(N,d,s,p,q):=\inf_{|\cP|= N} e(\cP,d,s,p,q)\asymp N^{-s/d+(1/p-1/q)_+}
\] 
where the infimum is over all $N$-point sets in $[0,1)^d$ and $(a)_+=\max\{a,0\}$ for $a\in\mathbb{R}$.  For a subsequence $(N_k)_{k\in\NN}$ of $\NN$ a sequence of point sets $(\cP_{N_k})_{k\in\mathbb{N}}$ in $[0,1)^d$, where $|\cP_{N_k}|=N_k$,  behaves asymptotically optimal if $e(\cP_{N_k},d,s,p,q)$ decays at this rate.

Rescalings of the integer lattice perform optimally for the approximation of Sobolev functions; a proof may be deduced from, e.g., \cite{NT06}. In the recent paper \cite{KS20}  by Krieg and the first author, a characterization was proven, showing that for general sequences of point sets optimality is determined by an $L_{\gamma}$-norm of the distance function $\dist(\cdot,\cP)$ for some $\gamma \in (0,\infty]$ which depends on $s,p$ and $q$.
%MS
 For more information on approximating Sobolev functions using sample values we refer to the references given in \cite{KS20}. The following theorem provides an asymptotic characterization of the minimal error achievable using lattice point sets in terms of their spectral test. 

\begin{theorem}\label{thm2}
Let $d \in \NN$, $p,q\in [1,\infty]$ and $s>d/p$. For every lattice point set $\mathcal{P}(L)$ in $[0,1)^d$ it holds that
\[
 e(\mathcal{P}(L),d,s,p,q)\,
 \asymp_{d,s,p,q} \,\sigma(L)^{s-d(1/p-1/q)_+}.
\]
Here, the asymptotic notation conceals constants indepedent of the integration lattice $L$.
\end{theorem}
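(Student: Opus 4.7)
The plan is to combine two equivalences. The first, taken from \cite{KS20} (as alluded to before the theorem), characterizes $e(\mathcal{P},d,s,p,q)$ for arbitrary finite $\mathcal{P}\subseteq [0,1)^d$ as
\[
 e(\mathcal{P},d,s,p,q)\asymp_{d,s,p,q}\|\dist(\cdot,\mathcal{P})\|_{L_\gamma([0,1)^d)}^{\tau},
\]
with $\tau=s-d(1/p-1/q)_+\ge 0$ and some $\gamma=\gamma(d,s,p,q)\in(0,\infty]$ depending only on the parameters. Specialized to $\mathcal{P}=\mathcal{P}(L)$, this reduces Theorem~\ref{thm2} to showing the distance-to-spectral-test equivalence
\[
 \|\dist(\cdot,\mathcal{P}(L))\|_{L_\gamma([0,1)^d)}\asymp_{d,\gamma}\sigma(L),\qquad \gamma\in(0,\infty],
\]
which is the ``averages of the distance to a lattice point set in terms of the spectral test'' announced in the abstract and which I would establish as an auxiliary equivalence.

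The key tool for that second equivalence is periodicity. An integration lattice $L$ satisfies $L\supseteq\ZZ^d$, so $\mathcal{P}(L)$ is a complete set of coset representatives of $L/\ZZ^d$ and for $x\in[0,1)^d$ the torus distance from $x$ to $\mathcal{P}(L)$ coincides with the Euclidean distance $\dist(x,L)$. By $L$-periodicity of $\dist(\cdot,L)$ and a change of variables onto a fundamental cell of volume $1/N$, the $L_\gamma$-norm of this torus distance over the unit cube is of order $\rho(L)$, the covering radius of $L$, and standard transference bounds between $L$ and its dual yield $\rho(L)\asymp_d\sigma(L)$. Since the torus distance is dominated by the Euclidean distance to $\mathcal{P}(L)\subseteq L$, this already gives the lower bound $\sigma(L)\lesssim_{d,\gamma}\|\dist(\cdot,\mathcal{P}(L))\|_{L_\gamma}$.

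For the matching upper bound one has to compare Euclidean and torus distances. Away from the boundary of $[0,1)^d$ the two agree, while near the boundary (and especially at the corners) the Euclidean distance to $\mathcal{P}(L)$ may be substantially larger, since translating the closest $L$-point by an integer vector into $[0,1)^d$ can move it to the opposite side of the cube. I would handle this by a pointwise bound $\dist(x,\mathcal{P}(L))\le C_d\,\rho(L)$ valid outside a thin boundary layer of volume $O_d(\rho(L))$, combined with the trivial estimate $\dist(\cdot,\mathcal{P}(L))\le\sqrt{d}$ inside that layer; since $\gamma$ is fixed and $\rho(L)\le 1$, the boundary contribution is absorbed into the constant.

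The main technical obstacle is exactly this boundary effect, i.e.\ controlling the Euclidean distance in terms of the covering radius near the faces and corners of the cube. Once the distance-to-spectral-test equivalence is in place, the theorem follows by raising it to the power $\tau\ge 0$ and combining with the characterization from \cite{KS20}, with all multiplicative constants depending only on $d,s,p,q$.
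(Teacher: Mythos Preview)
Your overall plan---reduce Theorem~\ref{thm2} to an equivalence $\|\dist(\cdot,\cP(L))\|_{L_\gamma}\asymp_d\sigma(L)$ via the characterization from \cite{KS20}---is exactly what the paper does; the paper isolates this equivalence as Proposition~\ref{pro:distspectral}. Your route to the \emph{lower} bound, through the torus distance $\dist(\cdot,L)$ and a transference inequality $\rho(L)\asymp_d\sigma(L)$, differs from the paper's direct argument with a hyperplane cover of spacing $\sigma(L)$, but it is a reasonable alternative.

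The \emph{upper} bound, however, has a genuine gap. Your boundary-layer estimate gives
\[
\|\dist(\cdot,\cP(L))\|_{L_\gamma}^{\gamma}\ \le\ \rho(L)^{\gamma}\ +\ (\sqrt{d})^{\gamma}\cdot O_d\bigl(\rho(L)\bigr),
\]
and for $\gamma>1$ the second term dominates as $\rho(L)\to 0$, so after taking the $\gamma$-th root you obtain only $\|\dist(\cdot,\cP(L))\|_{L_\gamma}\lesssim_{d,\gamma}\rho(L)^{1/\gamma}$, not $\rho(L)$. In the setting of the theorem $\gamma=\infty$ whenever $q\ge p$, and $\gamma=s(1/q-1/p)^{-1}\ge s\ge 1$ otherwise, so except for the single edge case $s=1$, $q=1$, $p=\infty$ your bound is too weak. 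The phrase ``the boundary contribution is absorbed into the constant'' is exactly where the argument breaks: a constant cannot repair a wrong \emph{power} of $\rho(L)$.

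What is actually required is the pointwise estimate $\|\dist(\cdot,\cP(L))\|_{L_\infty([0,1)^d)}\le C_d\,\sigma(L)$, i.e.\ control of the covering radius of $\cP(L)$ \emph{inside the cube}, not merely of $L$ in $\RR^d$; once this is known, H\"older's inequality handles all finite $\gamma$ at once. The paper achieves this by a different mechanism: given a ball $B(\bsy,\rho)$ with $\bsy\in[0,1)^d$ empty of points of $\cP(L)$, it first uses \cite[Lemmas~2 and~3]{KS20} to find a ball $B(\bsz,\rho')\subseteq B(\bsy,\rho)\cap[0,1)^d$ with $\rho'\ge u_d\,\rho$, and then observes that the translate of the LLL-reduced fundamental parallelotope $P$ (of diameter $\le d\,2^{d-1}\sigma(L)$) containing $\bsz$ must lie entirely in $B(\bsz,\rho')\subseteq[0,1)^d$ unless $\diam(P)>\rho'$, forcing $\rho\lesssim_d\sigma(L)$. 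This shrinking-ball step is precisely what resolves the boundary difficulty you flagged but did not overcome.
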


The proof of Theorem~\ref{thm2} will be provided in Section~\ref{sec4}.\\

Given a sequence of integration lattices $(L_{N_k})_{k\in\NN}$ in $\mathbb{R}^d$, the spectral test $\sigma(L_{N_k})$ cannot decay faster than $N_k^{-1/d}$, where $N_k=|\cP(L_{N_k})|$ and there exist sequences achieving this rate, see, e.g., \cite[Propositions~3~and~4]{PS20}. Thus, we can interpret Theorem~\ref{thm2} in the following way: 
\begin{corollary}\label{cor:optimal-lattices}
Lattice point sets are asymptotically optimal for the $L_q$-approximation of Sobolev functions from $W^s_p([0,1)^d)$ if and only if the spectral test behaves optimally, i.e, 
\[
e(\cP(L_{N_k}),d,s,p,q)\asymp e(N_k,d,s,p,q) \qquad \emph{if and only if} \qquad \sigma(L_{N_k})\asymp N_k^{-1/d}. 
\] 
\end{corollary}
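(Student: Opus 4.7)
The plan is to derive the corollary as a direct algebraic consequence of Theorem~\ref{thm2}, combined with the minimax rate $e(N,d,s,p,q)\asymp N^{-s/d+(1/p-1/q)_+}$ and the fact that the spectral test cannot decay faster than $N^{-1/d}$. The only non-trivial observation I will need is that the exponent appearing in Theorem~\ref{thm2} is strictly positive, so that raising the asymptotic equivalence to a fractional power preserves it.

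First I would set $\alpha:=s-d(1/p-1/q)_+$. Since $q\ge 1$, one has $(1/p-1/q)_+\le 1/p$, hence $d(1/p-1/q)_+\le d/p<s$, so $\alpha>0$. With this notation Theorem~\ref{thm2} reads
\[
e(\cP(L_{N_k}),d,s,p,q)\asymp_{d,s,p,q}\sigma(L_{N_k})^{\alpha},
\]
and the minimax rate becomes $e(N_k,d,s,p,q)\asymp N_k^{-\alpha/d}$.

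For the ``if'' direction, assume $\sigma(L_{N_k})\asymp N_k^{-1/d}$. Substituting into the displayed equivalence gives $e(\cP(L_{N_k}),d,s,p,q)\asymp N_k^{-\alpha/d}\asymp e(N_k,d,s,p,q)$, so the lattice sequence is asymptotically optimal. For the ``only if'' direction, assume $e(\cP(L_{N_k}),d,s,p,q)\asymp e(N_k,d,s,p,q)\asymp N_k^{-\alpha/d}$. Combining with Theorem~\ref{thm2} yields $\sigma(L_{N_k})^{\alpha}\asymp N_k^{-\alpha/d}$, and since $\alpha>0$ I may raise both sides to the power $1/\alpha$ (absorbing the resulting constants into the hidden constants of $\asymp$) to conclude $\sigma(L_{N_k})\asymp N_k^{-1/d}$.

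The only ``obstacle'' worth flagging is the verification that $\alpha>0$; without this, the equivalence $\sigma(L_{N_k})^\alpha\asymp N_k^{-\alpha/d}$ would not transfer to $\sigma(L_{N_k})$ itself. Once this is checked, the corollary is essentially a reformulation of Theorem~\ref{thm2} together with the known lower bound $\sigma(L_{N_k})\gtrsim N_k^{-1/d}$ recalled from \cite[Propositions~3~and~4]{PS20}, which guarantees that optimality of the spectral test is a meaningful notion.
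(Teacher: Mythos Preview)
Your argument is correct and matches the paper's approach: the corollary is obtained directly from Theorem~\ref{thm2} together with the minimax rate $e(N,d,s,p,q)\asymp N^{-\alpha/d}$, and your explicit verification that $\alpha=s-d(1/p-1/q)_+>0$ (using $(1/p-1/q)_+\le 1/p$ and $s>d/p$) makes precise the step the paper leaves implicit. The reference to \cite[Propositions~3 and~4]{PS20} is, as you note, only there to ensure the right-hand condition is non-vacuous and is not needed for the equivalence itself.
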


Further, by \cite[Theorem~2]{PS20} one can replace ``spectral test'' by ``isotropic discrepancy'' in Theorem \ref{thm2} and Corollary \ref{cor:optimal-lattices}.

\section{The volume of parallel bodies and the proof of Theorem~\ref{thm1}}\label{sec3}

We follow \cite[Proof of Theorem~2]{PS20} but we only highlight the necessary additional arguments.  By definition, the normalized isotropic discrepancy satisfies $J_N(\mathcal{P}(L))\le 1$, and thus we can assume that $$\sigma(L)\le \frac{1}{d \, 2^{2(d+1)}}.$$ By the second display on \cite[page~5]{PS20} we have 
\[
\mathrm{diam}(P)\le d\, 2^{d-1}\sigma(L),
\]
where $P$ is the fundamental parallelotope with respect to a reduced basis of $L$ as given by the LLL-algorithm and $\mathrm{diam}$ denotes the diameter of a set measured in Euclidean norm.
Therefore, without loss of generality,
\[
\mathrm{diam}(P)\le d\,2^{d-1}\frac{1}{d\, 2^{2(d+1)}}=\frac{1}{2^{d+3}}< 1.
\]

Let $K\subseteq [0,1]^d$ be non-empty and convex and $K^C=\mathbb{R}^d\backslash K$ its complement. As our interest lies in the volume of the following sets, we may assume that $K$ is closed, i.e., $K$ is a convex body in the sense of Schneider \cite{Sch14}. We define for every $\rho\ge 0$ the sets
\[
K_{\rho}^+:=\{\bsx\in K^C \ : \ \dist(\bsx,K)\le \rho\}\quad\text{and}\quad K_{\rho}^-:=\{\bsx\in K \ : \ \dist(\bsx,K^C)\le \rho\},
\] 
which split the neighbourhood $\{\bsx\in\mathbb{R}^d \ : \ \dist(\bsx,\partial K)\le \rho\}$ into a part outside of $K$ and a part inside of $K$. Here, $\dist(\bsx,A):=\inf_{\bsy\in A} \|\bsx-\bsy\|_2$ for $\bsx\in\mathbb{R}^d$ and $A\subseteq \mathbb{R}^d$.

Let $\vol$ denote the $d$-dimensional volume. Then it remains to establish the bound $\max\{\vol(K_{\rho}^+),\vol(K_{\rho}^-)\}\le 2^{d+3}\rho$ for $\rho\le 1$, in which we then set $\rho=\diam(P)$.

For this, we shall employ well-known arguments from convex geometry, e.g., taken from the book \cite{Sch14}. Given non-empty $A,B\subseteq \mathbb{R}^d$ we define the Minkowski addition and the Minkowski difference by
\[
A+B:=\bigcup_{\bsb\in B} (A+\bsb)\quad \text{and}\quad A\div B:=\bigcap_{\bsb\in B} (A-\bsb),\text{ respectively.}
\]
Let $B$ be the (open) unit ball of $(\mathbb{R}^d,\|\cdot\|_2)$. Then, for all $\rho> 0$,
\begin{align*}
K+\rho B&=\{\bsx\in\mathbb{R}^d \ : \ \dist(\bsx,K)<\rho\},\\
K\div\rho B&=\{\bsx\in K \ : \ \dist\big(\bsx,K^C\big)\ge \rho\}.
\end{align*}
We define a family of convex parallel sets by
\[
K_{\rho}:=
\begin{cases}
K+\rho B& \text{for }\rho\ge 0,\\
K\div (-\rho)B & \text{for } \rho < 0.
\end{cases}
\]
The largest $\rho>0$ such that $K\div \rho B\neq\emptyset$ is given by the inradius of $K$, which is defined by $r(K):=\sup\{\rho\ge 0 \ : \ \bsx+\rho B\subseteq K \text{ for some } \bsx\in \mathbb{R}^d\}$. As a consequence, $\vol(K_{-r(K)})=0$ and $\rho<-r(K)$ implies $K_{\rho}=\emptyset$. For $\rho> 0$ we have $K_{\rho}=K+\rho B$ and $K_{-\rho}=K\div \rho B$. Further, $K_0=K$. 

Comparing the definitions we see that for any $\rho\ge 0$
\begin{equation}\label{eq:volumedifference}
\vol(K_{\rho}^+)=\vol(K_{\rho})-\vol(K) \quad\text{and}\quad \vol(K_{\rho}^-)=\vol(K)- \vol(K_{-\rho}).
\end{equation}

We will use Steiner's formula (see, e.g., \cite[Eq.~(4.8)]{Sch14}) stating that for every $\rho\ge 0$

\begin{equation}\label{eq:steiner}
\vol(K+\rho B)=\sum_{j=0}^d \binom{d}{j}W_j(K)\rho^j,
\end{equation}
where $W_j(K)$ is the $j$-th quermassintegral of $K$. As a mixed volume, it is monotonous under set inclusion, i.e., it satisfies $W_j(K_1)\le W_j(K_2)$ for $j=0,\ldots, d$, whenever $K_1\subseteq K_2$ are convex bodies. Note that $W_0(K)=\vol(K)$ and $d\, W_1(K)$ is the surface area of $K$.

We also need a result noted by Hadwiger in his book \cite[Eq.~(30), page 207]{Had57}; compare also to \cite[Proposition~2.6]{RG20} by Richter and G\'omez who give additional references. 

\begin{lemma}\label{lem:diffvol}
The function $v(\rho):=\vol(K_{\rho})$ is differentiable on $(-r(K),\infty)$ and its derivative satisfies $v'(\rho)=d\, W_1(K_{\rho})$.
\end{lemma}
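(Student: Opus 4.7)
My plan is to reduce the lemma to Steiner's formula \eqref{eq:steiner}, splitting into the outer parallel case $\rho \geq 0$ and the inner parallel case $\rho \in (-r(K), 0)$; the latter will be the main obstacle.

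For $\rho_0 \geq 0$, the outer parallel body $K_{\rho_0} = K + \rho_0 B$ is itself a convex body and the Minkowski addition satisfies the semigroup relation $K_{\rho_0} + hB = K_{\rho_0 + h}$ for all $h \geq 0$. Applying Steiner's formula with $K_{\rho_0}$ in place of $K$ yields, as $h \downarrow 0$,
\[
v(\rho_0 + h) - v(\rho_0) = \sum_{j=1}^{d} \binom{d}{j} W_j(K_{\rho_0})\, h^j = d\,W_1(K_{\rho_0})\,h + O(h^2),
\]
so the right derivative at $\rho_0$ exists and equals $d\,W_1(K_{\rho_0})$. For the left derivative at $\rho_0 > 0$ I would differentiate the polynomial $v(\rho) = \sum_{j=0}^d \binom{d}{j} W_j(K) \rho^j$ directly and check that the outcome coincides with $d\,W_1(K_{\rho_0})$; this uses the multilinear expansion $W_1(K + \rho_0 B) = \sum_{i=0}^{d-1} \binom{d-1}{i} W_{i+1}(K)\, \rho_0^i$ together with the elementary identity $d \binom{d-1}{i} = (i+1) \binom{d}{i+1}$.

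For $\rho_0 \in (-r(K), 0)$ the inner parallel body $K_{\rho_0} = K \div (-\rho_0) B$ is still a non-empty convex body. The two Minkowski inclusions
\[
K_{\rho_0} + hB \;\subseteq\; K_{\rho_0 + h} \qquad \text{and} \qquad K_{\rho_0 - h} + hB \;\subseteq\; K_{\rho_0},
\]
valid for all sufficiently small $h > 0$, follow from the observation that any $hb + (-\rho_0)b' \in hB + (-\rho_0) B$ can be written as $(-\rho_0 + h)\,c$ with $c \in B$ a convex combination of two points of $B$. Combining these with Steiner's formula applied to the respective left-hand sides delivers the one-sided lower bounds
\[
v(\rho_0 + h) - v(\rho_0) \;\geq\; d\,W_1(K_{\rho_0})\,h + O(h^2), \qquad v(\rho_0) - v(\rho_0 - h) \;\geq\; d\,W_1(K_{\rho_0 - h})\,h + O(h^2).
\]

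The main obstacle is producing the matching upper bounds, because the reverse inclusion $K_{\rho_0 + h} \subseteq K_{\rho_0} + hB$ fails in general (already the sharp corners of a triangle show this). To close the gap I would invoke the Hausdorff continuity of the map $\rho \mapsto K_\rho$ on $(-r(K), \infty)$, which yields continuity of $\rho \mapsto W_1(K_\rho)$, together with the concavity of $\rho \mapsto v(\rho)^{1/d}$ coming from Brunn--Minkowski; these force the two one-sided derivatives to exist and to agree with $d\,W_1(K_{\rho_0})$. This is precisely the line of argument used by Hadwiger \cite[Eq.~(30), p.~207]{Had57} and by Richter and G\'omez \cite[Proposition~2.6]{RG20}, whose presentations I would follow for the inner-parallel part.
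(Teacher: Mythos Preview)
The paper does not prove this lemma at all; it simply records it as a known result and cites Hadwiger \cite[Eq.~(30), p.~207]{Had57} and Richter--G\'omez \cite[Proposition~2.6]{RG20}, the very references you invoke at the end of your sketch. Your write-up therefore already goes further than the paper does: the outer-parallel range $\rho\ge 0$ is handled completely and correctly via Steiner's polynomial together with the mixed-volume expansion $W_1(K+\rho_0 B)=\sum_{i=0}^{d-1}\binom{d-1}{i}W_{i+1}(K)\rho_0^{\,i}$ and the identity $d\binom{d-1}{i}=(i+1)\binom{d}{i+1}$, and for the inner-parallel range you correctly isolate the genuine obstruction (the failure of $K_{\rho_0+h}\subseteq K_{\rho_0}+hB$) before deferring to the same sources as the paper.

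One small caution on the closing sentence: the concavity of $\rho\mapsto v(\rho)^{1/d}$ on $(-r(K),0]$ is not literally ``Brunn--Minkowski'' but needs the additional inclusion $(1-t)K_{\rho_1}+tK_{\rho_2}\subseteq K_{(1-t)\rho_1+t\rho_2}$, which follows from convexity of $K$; and the passage from the one-sided lower bounds plus concavity plus continuity of $\rho\mapsto W_1(K_\rho)$ to full differentiability still takes a couple of lines to write out cleanly. Since you explicitly point to the cited references for this step, and the paper does no more, the proposal is fine as it stands.
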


From this we derive the following inequality.

\begin{lemma}\label{lem:outervsinner}
For all $\rho\ge 0$ we have $\vol(K_{\rho}^+)\ge \vol(K_{\rho}^-).$
\end{lemma}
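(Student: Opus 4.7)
The plan is to reformulate the desired inequality via \eqref{eq:volumedifference} and prove it as midpoint convexity of the volume function $v(\rho):=\vol(K_\rho)$. Indeed, the identities in \eqref{eq:volumedifference} turn $\vol(K_\rho^+) \ge \vol(K_\rho^-)$ into the symmetric condition
\[
v(\rho) + v(-\rho) \ge 2\,v(0),
\]
which is precisely the convexity inequality for $v$ at the midpoint $0 = \tfrac{1}{2}(\rho + (-\rho))$.

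I would first establish that $v$ is convex on $(-r(K),\infty)$. The family $\{K_\rho\}_{\rho>-r(K)}$ is monotonically increasing in $\rho$ with respect to set inclusion: for $\rho\ge 0$ the Minkowski sum $K+\rho B$ grows with $\rho$, while for $\rho<0$ the ball $(-\rho)B$ being subtracted shrinks as $\rho$ increases, so the Minkowski difference $K\div(-\rho)B$ grows. Combined with the set-inclusion monotonicity of $W_1$ recalled after \eqref{eq:steiner} and Lemma~\ref{lem:diffvol}, this forces $v'(\rho)=d\,W_1(K_\rho)$ to be nondecreasing, and hence $v$ to be convex on $(-r(K),\infty)$. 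For $\rho\in[0,r(K))$ both $\rho$ and $-\rho$ lie in this interval and the desired midpoint inequality is immediate.

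The only remaining case, $\rho\ge r(K)$, is the main technical point, since then $-\rho$ sits at or past the left endpoint of the differentiability interval and $v(-\rho)=0$. I would handle it by extending $v$ to all of $\mathbb{R}$ via $v(\rho):=0$ for $\rho\le -r(K)$: the extension is continuous at $-r(K)$ (as $K_\rho$ decreases to $K_{-r(K)}$ when $\rho\downarrow -r(K)$, so $v(\rho)\to \vol(K_{-r(K)})=0$ by monotone convergence), and its one-sided derivatives are nondecreasing on all of $\mathbb{R}$: identically zero on $(-\infty,-r(K))$ and equal to the nonnegative quantity $d\,W_1(K_\rho)$ on $(-r(K),\infty)$. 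Thus the extended $v$ is convex on $\mathbb{R}$, and the midpoint inequality $v(0)\le\tfrac{1}{2}(v(\rho)+v(-\rho))$ holds for every $\rho\ge 0$; by \eqref{eq:volumedifference} this is the claim.
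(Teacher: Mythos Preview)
Your proof is correct and follows essentially the same route as the paper. Both arguments rest on Lemma~\ref{lem:diffvol} together with the monotonicity of $\rho\mapsto K_\rho$ and of $W_1$, which makes $v'(\rho)=d\,W_1(K_\rho)$ nondecreasing; the paper packages this via the mean value theorem on each of $[0,\rho]$ and $[-\rho,0]$, while you phrase the same fact as convexity of $v$ and invoke midpoint convexity. For the boundary case $\rho\ge r(K)$ your extension of $v$ by zero is fine (the continuity at $-r(K)$ you justify is exactly what is needed for the junction), though the paper's shortcut---use $v(\rho)\ge v(r(K))$ and the already-established case $\rho=r(K)$---avoids having to argue that the piecewise function is globally convex.
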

\begin{proof}
Using \eqref{eq:volumedifference}, this inequality can be written as $v(\rho)-v(0)\ge v(0)-v(-\rho)$. Suppose first that $0<\rho\le r(K)$. Lemma \ref{lem:diffvol} and the mean value theorem yield some $\rho_1\in (0,\rho)$ and $\rho_2\in (-\rho,0)$ such that
\[
\frac{v(\rho)-v(0)}{\rho}=v'(\rho_1) \quad \text{and}\quad \frac{v(0)-v(-\rho)}{\rho}=v'(\rho_2).
\]
Since $K_{\rho_2}\subseteq K_{\rho_1}$ and the quermassintegral $W_1(\cdot)$ is monotonous, we have $v'(\rho_1)\ge v'(\rho_2)$. This completes the proof in this case.

If $\rho=0$, we have equality by definition, and if $\rho>r(K)$, the monotonicity of the volume yields $v(r(K))\le v(\rho)$, and thus from the previously established case $\rho=r(K)$ it follows that
\[
v(\rho)-v(0)\ge v(r(K))-v(0)\ge v(0)-v(-r(K))=v(0)-v(-\rho)
\]
since $v(-r(K))=v(-\rho)=0$. By means of 
 \eqref{eq:volumedifference} this completes the proof.
\end{proof}

We finalize our discussion by establishing:
\begin{lemma}\label{le3}
For any $\rho\in [0,1]$ we have $\max\{\vol(K_{\rho}^+),\vol(K_{\rho}^-)\}\le 2^{d+3}\rho$.
\end{lemma}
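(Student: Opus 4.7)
The plan is to first apply Lemma~\ref{lem:outervsinner}, which reduces the problem to bounding $\vol(K_\rho^+)$ since $\vol(K_\rho^-)\le \vol(K_\rho^+)$. Combining \eqref{eq:volumedifference} with Steiner's formula \eqref{eq:steiner} then gives
$$\vol(K_\rho^+)=\sum_{j=1}^d \binom{d}{j}W_j(K)\,\rho^j.$$
The hypothesis $K\subseteq [0,1]^d$ and the monotonicity of $W_j$ under inclusion yield $W_j(K)\le W_j([0,1]^d)$, while for $\rho\in[0,1]$ the inequality $\rho^j\le \rho$ (valid for $j\ge 1$) linearises the resulting sum in $\rho$.

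The main step is to identify $W_j([0,1]^d)$. I would compute $\vol([0,1]^d+\rho B)$ directly by decomposing $\RR^d$ according to the subset $S\subseteq\{1,\ldots,d\}$ of coordinates of a point that lie outside $[0,1]$ and, for each such coordinate, whether the point is above $1$ or below $0$. For a fixed $S$ with $|S|=k$ and a fixed sign pattern, the region where the squared distances in the $S$-coordinates sum to at most $\rho^2$ is a positive orthant of a $k$-dimensional ball of radius $\rho$, contributing volume $\kappa_k\rho^k/2^k$, where $\kappa_k:=\pi^{k/2}/\Gamma(k/2+1)$ is the volume of the Euclidean unit ball in $\RR^k$. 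Summing over the $2^k$ sign patterns and the $\binom{d}{k}$ choices of $S$ produces
$$\vol([0,1]^d+\rho B)=\sum_{k=0}^d \binom{d}{k}\kappa_k\rho^k,$$
and matching coefficients with Steiner's formula applied to $[0,1]^d$ gives $W_j([0,1]^d)=\kappa_j$.

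It remains to bound $\kappa_j$ uniformly. The recursion $\kappa_j=(2\pi/j)\kappa_{j-2}$ shows that $(\kappa_j)_{j\ge 7}$ is decreasing, and a direct check of $\kappa_0,\ldots,\kappa_6$ identifies the maximum as $\kappa_5=8\pi^2/15<6$; hence $\kappa_j\le 6$ for every $j\ge 0$. Putting everything together,
$$\vol(K_\rho^+)\le \rho\sum_{j=1}^d \binom{d}{j}\kappa_j\le 6\rho\,(2^d-1)\le 2^{d+3}\rho,$$
completing the argument. The main technical step I expect is the coordinate-wise evaluation of $\vol([0,1]^d+\rho B)$ used to pin down the quermassintegrals of the cube; once this is done, the remaining estimates are elementary.
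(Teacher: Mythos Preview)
Your proof is correct and follows essentially the same route as the paper: reduce to $\vol(K_\rho^+)$ via Lemma~\ref{lem:outervsinner}, expand with Steiner's formula, use monotonicity of the quermassintegrals to pass to the cube, identify $W_j([0,1]^d)=\kappa_j$, and bound $\kappa_j$ uniformly. The only substantive difference is how you obtain $W_j([0,1]^d)=\kappa_j$: the paper cites the intrinsic-volume identity $V_j([0,1]^d)=\binom{d}{j}$ and the standard relation $\binom{d}{j}W_j=\kappa_j V_{d-j}$, whereas you compute $\vol([0,1]^d+\rho B)$ directly via the orthant decomposition and match coefficients---a more self-contained but equivalent derivation. (One small wording issue: the recursion $\kappa_j=(2\pi/j)\kappa_{j-2}$ shows only that the even and odd subsequences decrease for $j\ge 7$, not that the full sequence does; this still suffices, since it forces $\sup_j\kappa_j=\max_{0\le j\le 6}\kappa_j=\kappa_5$.)
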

\begin{proof}
Lemma \ref{lem:outervsinner} implies that %the equality
 $\max\{\vol(K_{\rho}^+),\vol(K_{\rho}^-)\}= \vol(K_{\rho}^+)$ and it remains to estimate the latter. By Steiner's formula \eqref{eq:steiner} we have
\[
\vol(K_{\rho}^+)=\vol(K_{\rho})-\vol(K)=\sum_{j=1}^d \binom{d}{j} W_j(K) \rho^j.
\]
The monotonicity of the quermassintegrals yields
\[
\vol(K_{\rho}^+)\le \sum_{j=1}^d \binom{d}{j} W_j([0,1]^d) \rho^j
\]
with equality for $K=[0,1]^d$. According to Lotz et al.\ \cite[Example~1.3]{LMN+19} it is a classical fact that for $j=0,1,\ldots, d$ the $j$-th intrinsic volume $V_j([0,1]^d)$ of the unit cube equals $\binom{d}{j}$. The relation to the quermassintegrals is given by $\binom{d}{j}W_j([0,1]^d)=\kappa_jV_{d-j}([0,1]^d),$ where $\kappa_j$ is the $j$-dimensional volume of the unit ball of $(\mathbb{R}^j,\|\cdot\|_2)$. Together with the symmetry of the binomial coefficients, this implies $W_j([0,1]^d)=\kappa_j$. Therefore, as $\rho\le 1$, we have
\[
\vol(K_{\rho}^+)\le \rho\sum_{j=1}^d \binom{d}{j}\kappa_j \rho^{j-1} \le \rho\sum_{j=1}^d\binom{d}{j}\kappa_j.
\]
We complete the proof by using the fact that $\kappa_j\le \kappa_5=8\pi^2/15\le 2^3$ for every $ j\in \NN$ and that $\sum_{j=0}^d \binom{d}{j}=2^d$.
\end{proof}

The estimate from Lemma~\ref{le3} with $\rho={\rm diam}(P) \le d\, 2^{d-1} \sigma(L)$ has to be employed in \cite[page~5, lines 19-21]{PS20} which yields Theorem~\ref{thm1} and corrects  \cite[Theorem~2]{PS20}.\\

We record the following consequence of Lemma \ref{le3} that is interesting on its own.
\begin{corollary}\label{co1}
Let $K\subseteq [0,1]^d$ be non-empty and convex.
 Then, for any $\rho\in [0,1]$ we have
\[
\vol\big(\{\bsx\in\mathbb{R}^d\ \colon \ \dist(\bsx,\partial K)\le \rho\}\big)\le d\, 2^{d+4}\rho.
\]
\end{corollary}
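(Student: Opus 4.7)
The plan is to decompose the $\rho$-neighbourhood of $\partial K$ into the two sets $K_\rho^+$ and $K_\rho^-$ that already appear in Lemma~\ref{le3}, and then simply add up the two bounds it provides. More precisely, I would aim to establish the identity
\[
\{\bsx\in\mathbb{R}^d:\dist(\bsx,\partial K)\le\rho\}=K_\rho^{+}\,\cup\, K_\rho^{-},
\]
where the union is disjoint since $K_\rho^{+}\subseteq K^{C}$ and $K_\rho^{-}\subseteq K$.

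To justify this identity I would invoke the convexity and closedness of $K$. For $\bsx\in K^{C}$ the nearest point of $K$ to $\bsx$ must lie in $\partial K$: if it were an interior point, one could slide it slightly towards $\bsx$ along the connecting segment (which stays in $K$ by convexity), contradicting the choice of nearest point. Hence $\dist(\bsx,\partial K)=\dist(\bsx,K)$ on $K^{C}$, so the outer half of the neighbourhood is exactly $K_\rho^{+}$. Symmetrically, for $\bsx\in K$ every segment joining $\bsx$ to a point of $K^{C}$ must cross $\partial K$, which gives $\dist(\bsx,\partial K)=\dist(\bsx,K^{C})$; together with the fact that $\partial K\subseteq K$ (since $K$ is closed) and that boundary points satisfy $\dist(\bsx,K^{C})=0$, the inner half of the neighbourhood is exactly $K_\rho^{-}$.

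With the decomposition in hand, Lemma~\ref{le3} supplies $\vol(K_\rho^{+})\le 2^{d+3}\rho$ and $\vol(K_\rho^{-})\le 2^{d+3}\rho$ for $\rho\in[0,1]$. Since the two sets are disjoint,
\[
\vol\bigl(\{\bsx\in\mathbb{R}^d:\dist(\bsx,\partial K)\le\rho\}\bigr)=\vol(K_\rho^{+})+\vol(K_\rho^{-})\le 2\cdot 2^{d+3}\rho=2^{d+4}\rho,
\]
which is even stronger than the claimed bound $d\,2^{d+4}\rho$. No serious obstacle is expected: the only delicate point is the identification $\dist(\cdot,\partial K)=\dist(\cdot,K)$ on $K^{C}$ and $\dist(\cdot,\partial K)=\dist(\cdot,K^{C})$ on $K$, both of which are direct consequences of the convexity of $K$, and the factor $d$ in the statement is just slack that accommodates the trivial union bound.
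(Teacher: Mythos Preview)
Your proposal is correct and follows exactly the route the paper intends: the text already records that $K_\rho^{+}$ and $K_\rho^{-}$ ``split the neighbourhood $\{\bsx\in\mathbb{R}^d:\dist(\bsx,\partial K)\le\rho\}$ into a part outside of $K$ and a part inside of $K$'', and the corollary is stated (without proof) as a direct consequence of Lemma~\ref{le3}. Your observation that summing the two bounds yields $2^{d+4}\rho$ rather than $d\,2^{d+4}\rho$ is valid as well---the factor $d$ in the stated corollary is indeed slack.
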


\begin{remark}
The dependence of the upper bound on the dimension $d$ can be improved to be sub-exponential in $d$. Essentially, it is of order of magnitude $d^{c d^{2/3}}$ for some constant $c>0$. This can be achieved because the above upper estimate of the sum $$\sum_{j=1}^d\binom{d}{j}\kappa_j \quad \mbox{with the volumes of the unit balls $\kappa_j=  \frac{\pi^{j/2}}{\Gamma(1+j/2)}$}$$ can be replaced with the following bound (the lower bound is only presented as reference value):  
\[ \frac{1}{\sqrt{2\pi}{\rm e}d}  d^{\delta \, d^{2/3 - \delta}} \le \sum_{j=1}^d {d \choose j} \frac{\pi^{j/2}}{\Gamma(1+j/2)} \lesssim \left(d \sqrt{2 {\rm e}^3 \pi}\right)^{\kappa \,  d^{2/3}},\]
where $\delta \in (0,2/3)$ and $\kappa >{\rm e} (2\pi)^{1/3}$. The implied constant in the $\lesssim$ notation is absolute and can be figured out explicitly from the proof below.

\end{remark}

\begin{proof}
We use the estimates $$\frac{n^k}{k^k} \le {n \choose k} \le \frac{n^k}{k!} < \left(\frac{n {\rm e}}{k}\right)^k \quad \mbox{for $1 \le k \le n$,}$$ and Stirling's formula for the $\Gamma$-function which states that for all $x>0$ we have $$\Gamma(x)=\sqrt{\frac{2 \pi}{x}} \left(\frac{x}{{\rm e}}\right)^x {\rm e}^{\mu(x)},$$ where the function $\mu$ satisfies $0< \mu(x) < \frac{1}{12 x}$ for all $x>0$. 

We first show the upper bound. Using the above mentioned estimates we have
\begin{eqnarray*}
\sum_{j=1}^d {d \choose j} \frac{\pi^{j/2}}{\Gamma(1+j/2)} & \le & \frac{{\rm e}}{\sqrt{2 \pi}} \sum_{j=1}^d \frac{(d \sqrt{2 {\rm e}^3 \pi})^j}{j^{3j/2}}. 
\end{eqnarray*}
Let $\kappa > {\rm e}(2 \pi)^{1/3}$. Then we split the above sum into two parts and obtain this way
\begin{eqnarray*}
\sum_{j=1}^d {d \choose j} \frac{\pi^{j/2}}{\Gamma(1+j/2)} & \le & \frac{{\rm e}}{\sqrt{2 \pi}} \left[ \sum_{j=1 \atop j \le \kappa d^{2/3}}^d \frac{(d \sqrt{2 {\rm e}^3 \pi})^j}{j^{3j/2}}+  \sum_{j=1 \atop j > \kappa d^{2/3}}^d \frac{(d \sqrt{2 {\rm e}^3 \pi})^j}{j^{3j/2}}\right]\\
& \le &  \frac{{\rm e}}{\sqrt{2 \pi}} \left[ \left(d \sqrt{2 {\rm e}^3 \pi}\right)^{\kappa d^{2/3}} \sum_{j=1}^{\infty} \frac{1}{j^{3j/2}}+  \sum_{j=1 \atop j > \kappa d^{2/3}}^d \left(\frac{\sqrt{2 {\rm e}^3 \pi}}{\kappa^{3/2}}\right)^j\right]\\
& \lesssim & \left(d \sqrt{2 {\rm e}^3 \pi}\right)^{\kappa d^{2/3}}.
\end{eqnarray*}

Now we show the lower bound. Again using the above mentioned estimates gives
\[
\sum_{j=1}^d {d \choose j} \frac{\pi^{j/2}}{\Gamma(1+j/2)}  \ge \frac{1}{\sqrt{2\pi}{\rm e}}\sum_{j=1}^d \frac{d^j}{j^{3j/2}}.
\]
Put $x:=\frac{2}{3}-\delta$ with arbitrarily small $\delta \in (0,2/3)$. We estimate the above sum from below by considering only the summand $j=\lfloor d^x \rfloor$. This way we obtain
\begin{eqnarray*}
\sum_{j=1}^d {d \choose j} \frac{\pi^{j/2}}{\Gamma(1+j/2)} & \ge & \frac{1}{\sqrt{2\pi}{\rm e}}\frac{d^{\lfloor d^x\rfloor}}{\lfloor d^x\rfloor^{3 \lfloor d^x \rfloor /2}}\\
& \ge &  \frac{1}{\sqrt{2\pi}{\rm e}d}\frac{d^{ d^x}}{(d^x)^{3 d^x /2}}\\
& \ge & \frac{1}{\sqrt{2\pi}{\rm e}d} d^{\delta\,d^{(1-\delta)2/3}}.
\end{eqnarray*} 
\end{proof}

\section{Relation of the spectral test to the distance function of a lattice and the proof of  Theorem~\ref{thm2}}\label{sec4}

For the proof of Theorem \ref{thm2} we shall employ the recent characterization from  \cite[Theorem~1]{KS20}, see also Remark 8 there, which says that for any finite and nonempty point set $\cP\subseteq [0,1)^d$ the minimal error satisfies, for implied constants independent of $\cP$,

\begin{equation}\label{aserr}
e(\cP,d,s,p,q)\, \asymp_{d,s,p,q} \, \|\dist(\cdot,\cP)\|_{L_{\gamma}([0,1)^d)}^{s-d(1/p-1/q)_+},
\end{equation}
where $\gamma =s(1/q-1/p)^{-1}$ if $q<p$ and $\gamma=\infty$ if $q\ge p$. We remark that
\[
\|\dist(\cdot,\cP)\|_{L_{\infty}([0,1)^d)}=\sup_{\bsy\in [0,1)^d} \min_{\bsx\in \cP}\|\bsx-\bsy\|_2
\]
is the covering radius of $\cP$, which determines up to constants the quantity $e(\cP,d,s,p,q)$ in the range $q\ge p$ but is not sufficient to cover the range $q<p$. 

The proof of Theorem~\ref{thm2} is a combination of \eqref{aserr} with the following proposition which puts the spectral test into relation with an integral over the distance function to a lattice point set. Such integrals are studied in the context of lattice quantizers, see, e.g., Conway and Sloane \cite{CS82} as well as Graf and Luschgy \cite[Chapter 8]{GL00}.

\begin{proposition}\label{pro:distspectral}
Let $d \in \NN$. Then there exist numbers $0 < c_d < C_d$, only depending on $d$, with the following property: For every lattice point set $\mathcal{P}(L)$ in $[0,1)^d$  we have
\[
\frac{c_d}{2^{1/\gamma}}\,\sigma(L)\le \bigl\|\dist\bigl(\cdot,\mathcal{P}(L)\bigr)\bigr\|_{L_{\gamma}([0,1)^d)}\le C_d\, \sigma(L),
\]  
for every $\gamma \in (0,\infty]$.
\end{proposition}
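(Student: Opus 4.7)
The plan is to reduce everything to a statement about the Voronoi cell $V(0)=\{\bsz\in\RR^d\colon\|\bsz\|_2\le\|\bsz-\bsx\|_2\text{ for all }\bsx\in L\}$, exploiting that $\sigma(L)\asymp_d R$ where $R:=\max_{\bsz\in V(0)}\|\bsz\|_2$ is the covering radius of $L$. The inequality $R\le\diam(P)\le d\,2^{d-1}\sigma(L)$ follows from the fundamental-parallelotope argument recalled in Section~\ref{sec3}, while the reverse $R\gtrsim_d\sigma(L)$ follows from a standard transference between the covering radius and the length of the shortest dual vector. Granted this equivalence, the upper bound reduces to an $L_\infty$-estimate on the cube and the lower bound to a moment estimate on the Voronoi cell.

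For the upper bound I would use $\|\dist(\cdot,\mathcal{P}(L))\|_{L_\gamma([0,1)^d)}\le\|\dist(\cdot,\mathcal{P}(L))\|_{L_\infty([0,1)^d)}$, valid since $[0,1)^d$ has unit measure. The right-hand side is the (non-periodic) covering radius of $\mathcal{P}(L)$ in the cube, and it is bounded by $C_d\sigma(L)$: combining the periodic estimate $\dist(\bsy,L)\le R$ with a Minkowski-type counting argument shows that for $c_d$ depending only on $d$, the ball of radius $c_d R$ around any $\bsy\in[0,1)^d$ contains enough lattice points to force at least one of them to lie in $\mathcal{P}(L)=L\cap[0,1)^d$.

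For the lower bound, $\mathcal{P}(L)\subseteq L$ gives $\dist(\bsy,\mathcal{P}(L))\ge\dist(\bsy,L)$ pointwise, while the $\ZZ^d$-periodicity of $\dist(\cdot,L)$ combined with the partition of a $\ZZ^d$-fundamental domain into $N$ translates of $V(0)$ produces the identity
\[
\int_{[0,1)^d}\dist(\bsy,L)^\gamma\,d\bsy \;=\; N\int_{V(0)}\|\bsz\|_2^\gamma\,d\bsz.
\]
It therefore suffices to establish the half-volume claim
\[
\vol\bigl\{\bsz\in V(0)\colon\|\bsz\|_2\ge R/(2d)\bigr\} \;\ge\; \vol(V(0))/2,
\]
since this immediately gives $N\int_{V(0)}\|\bsz\|_2^\gamma\,d\bsz\ge(R/(2d))^\gamma/2$, whence $\|\dist(\cdot,L)\|_{L_\gamma}\ge R/(2d\cdot 2^{1/\gamma})\ge c_d\sigma(L)/2^{1/\gamma}$.

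The main obstacle is the half-volume claim, which I would prove using central symmetry of $V(0)$ and Brunn's theorem. Pick $\bsz^*\in V(0)$ with $\|\bsz^*\|_2=R$, set $\bse=\bsz^*/R$, and consider the slice function $S(u)=\vol_{d-1}(V(0)\cap\{\bsz\cdot\bse=u\})$. Central symmetry of $V(0)$ makes $S$ even; Brunn's theorem makes $S^{1/(d-1)}$ concave on $[-R,R]$; and $S(\pm R)=0$ because $V(0)$ is contained in the closed Euclidean ball of radius $R$ about the origin. Concavity above the chord from $(0,S(0)^{1/(d-1)})$ to $(R,0)$ gives $S(u)\ge S(0)(1-u/R)^{d-1}$ on $[0,R]$, while the symmetry and concavity of $S^{1/(d-1)}$ yield $S(u)\le S(0)$ there. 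For $m=R/(2d)$, the elementary inequality $(1-1/(2d))^d\ge 1/2$ (which holds for every $d\ge 1$) yields
\[
\int_m^R S(u)\,du \;\ge\; \frac{S(0)R}{d}\Bigl(1-\frac{1}{2d}\Bigr)^{\!d} \;\ge\; \frac{S(0)R}{2d} \;\ge\; m\,S(0)\;\ge\;\int_0^m S(u)\,du.
\]
Since $\|\bsz\|_2\ge|\bsz\cdot\bse|$ pointwise, the set $\{\bsz\in V(0):\|\bsz\|_2\ge m\}$ has measure at least $2\int_m^R S(u)\,du\ge\int_0^R S(u)\,du=\vol(V(0))/2$, completing the proof of the claim.
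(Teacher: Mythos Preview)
Your proposal is correct and takes a genuinely different route from the paper.

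\textbf{Upper bound.} Both you and the paper reduce to $\gamma=\infty$ via H\"older. The paper then argues by contradiction: given the largest empty ball $B(\bsy,\rho)$, it invokes \cite[Lemmas~2 and~3]{KS20} to produce a sub-ball $B(\bsz,\rho')\subseteq B(\bsy,\rho)\cap[0,1)^d$ with $\rho'\ge u_d\rho$, and observes that the translate of the reduced fundamental parallelotope containing $\bsz$ would otherwise place a point of $\mathcal P(L)$ inside $B(\bsz,\rho')$. Your ``Minkowski-type counting argument'' is a valid alternative, but as written it is only a sketch: the boundary effect (the nearest lattice point to $\bsy$ may lie outside $[0,1)^d$) has to be handled explicitly. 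One clean completion: project $\bsy$ onto $[R,1-R]^d$ (distance $\le R\sqrt d$), then take the lattice point nearest to the projection; it lies in $[0,1)^d$ and within $(1+\sqrt d)R$ of $\bsy$. This needs $R<1/2$, which may be assumed since otherwise the bound is trivial.

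\textbf{Lower bound.} Here the two arguments diverge sharply. The paper works directly from the hyperplane-covering description of $\sigma(L)$: it takes a covering $\mathcal H$ with spacing $\sigma(L)$, notes that at most $\sqrt d/\sigma(L)+2$ hyperplanes meet $[0,1)^d$, bounds each slab $\{\dist(\cdot,H)<t\sigma(L)\}\cap[0,1)^d$ by $2t\sigma(L)\,v_d$ with $v_d=\sup_H\vol_{d-1}(H\cap[0,1)^d)$, and chooses $t_d=(12\sqrt d\,v_d)^{-1}$ so that the complement has measure $\ge 1/2$. Your Voronoi-cell route via Brunn's concavity theorem is more intrinsic to the lattice geometry, avoids the external section constant $v_d$, and yields the explicit threshold $R/(2d)$; the identity $\int_{[0,1)^d}\dist(\cdot,L)^\gamma=N\int_{V(0)}\|\bsz\|^\gamma d\bsz$ also ties in naturally with the quantizer literature the paper cites. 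The price is that you must separately pass from $R$ back to $\sigma(L)$; note that full transference is unnecessary here, since the midpoint between adjacent hyperplanes in a spacing-$\sigma(L)$ covering already gives $R\ge\sigma(L)/2$.
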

\begin{proof}
H\"older's inequality implies $\|\dist(\cdot,\mathcal{P}(L))\|_{L_{\gamma}([0,1)^d)}\le \|\dist(\cdot,\mathcal{P}(L))\|_{L_{\infty}([0,1)^d)},$ 
and therefore it is sufficient to prove the lower bound for $\gamma \in (0,\infty)$  and the upper for $\gamma=\infty$.

We start with the proof of the lower bound and take a hyperplane covering $\mathcal{H}$ of $L$ with distance $\sigma(L)$ (see Hellekalek~\cite[Section 5.4]{H98}). For $t\in (0,1/2)$ consider the set
\[
A_t:=\bigg\{\bsx\in [0,1)^d \ \colon \ \dist\bigg(\bsx,\bigcup_{H\in\mathcal{H}}H\bigg)\ge t \, \sigma(L)\,\bigg\}.
\]
As the family $\mathcal{H}$ covers the lattice $L$, we have $\mathcal{P}(L)\subseteq \bigcup_{H\in\mathcal{H}}H$, and thus for $\bsx\in A_t$ it holds that $\dist(\bsx,\mathcal{P}(L))\ge t\, \sigma(L)$. Taking powers and integrals on both sides yields
\[
\int_{[0,1)^d}\dist(\bsx,\mathcal{P}(L))^{\gamma}\,{\rm d}\bsx\ge  t^{\gamma} \, \sigma(L)^{\gamma} \, \vol(A_t).
\]

For establishing the lower bound it suffices to find $t_d>0$ such that $\vol(A_{t_d})\ge 1/2$. To this end, we show that the volume of $B_{t}:=[0,1)^d\backslash A_{t}$ satisfies $\vol(B_{t_d})\le 1/2$ for some $t_d>0$. We first decompose the set $B_t$ into the disjoint union $B_t=\bigcup_{H\in\mathcal{H}} S_t(H),$
where we let $S_t(H):=\{\bsx\in [0,1)^d \ \colon \ \dist(\bsx,H)< t\, \sigma(L)\}$. Consequently, its volume is $\vol(B_t)=\sum_{H\in\mathcal{H}} \vol(S_t(H))$. 

For any $t>0$, at most $\sqrt{d}/\sigma(L)+2$ of the sets $S_t(H), H\in\mathcal{H},$ are non-empty, and thus only finitely many terms of the sum are non-zero. This is because the cube $[0,1)^d$ has diameter $\sqrt{d}$ and can therefore be intersected by no more than $\sqrt{d}/\sigma(L)$ hyperplanes contained in $\mathcal{H}$. The volume of a set $S_t(H)$ is bounded by its width, which is at most $2t\,\sigma(L)$ times the quantity $\sup_H \vol_{d-1}(H\cap [0,1)^d)$, where the supremum is extended over all hyperplanes $H$. Since $[0,1)^d$ is bounded, this supremum is bounded by some constant only depending on the dimension, call it $v_d$. This implies, since $H\in\mathcal{H}$ can be arbitrary,
\[
\vol(B_t)\le (\sqrt{d}/\sigma(L)+2)\, 2t \,\sigma(L)\, v_d=(2\sqrt{d}+4\sigma(L))\,v_d\, t.
\]
Using that $\sigma(L)\le \sqrt{d}$ we can choose $t_d=(12\sqrt{d}v_d)^{-1}$ such that $\vol(B_{t_d})\le 1/2$. This completes the proof of the lower bound.

We turn to the proof of the upper bound for which we have to find $C_d>0$ such that $\|\dist(\cdot,\mathcal{P}(L))\|_{L_{\infty}([0,1)^d)}\le C_d\,\sigma(L)$. Let $$\rho:=\frac{1}{2} \|\dist(\cdot,\mathcal{P}(L))\|_{L_{\infty}([0,1)^d)}.$$
Then there exists a ball $B(\bsy,\rho):=\bsy+\rho B$ with $\bsy\in [0,1)^d$ which is empty of points from $\mathcal{P}(L)$. By \cite[Lemmas 2 and 3]{KS20} there exists a ball $B(\bsz,\rho')$ contained in $ B(\bsy,\rho)\cap [0,1)^d$ with $\rho'\ge u_d \rho$, where the quantity $u_d>0$ only depends on $d$. Let $P$ be the fundamental parallelotope with respect to a reduced basis of $L$ such that $\diam(P)\le d 2^{d-1}\sigma(L)$ and consider the translate $\bsx+P$, $\bsx\in L$, containing the center $\bsz$ of the ball. If $\diam(P)\le \rho'$, then we must have the inclusions $\bsx+P\subseteq B(\bsz,\rho')\subseteq [0,1)^d$ and thus $\bsx\in \mathcal{P}(L)\cap B(\bsz,\rho')$, a contradiction to $B(\bsy,\rho)\cap \cP(L)=\emptyset$. Therefore, we must have $\diam(P)>\rho'\ge u_d \rho$ and $$ \|\dist(\cdot,\mathcal{P}(L))\|_{L_{\infty}([0,1)^d)}\le C_d\, \sigma(L)$$ for the quantity $C_d=d2^{d}u_d^{-1}$.
\end{proof}

\begin{remark}
An inspection of the proof of Proposition \ref{pro:distspectral} shows that it, and also Theorem~\ref{thm2}, remains valid if we consider point sets arising from the intersection of $[0,1)^d$ with any lattice, not necessarily containing $\mathbb{Z}^d$, under the assumption that these point sets are not concentrated on a single hyperplane.
\end{remark}

\bibliographystyle{plain}

\begin{small}
\noindent\textbf{Authors' addresses:}\\
\noindent Mathias Sonnleitner, Institut f\"ur Analysis, Johannes Kepler Universit\"{a}t Linz, Altenbergerstr.~69, 4040 Linz, Austria.\\
\textbf{E-mail:} \texttt{mathias.sonnleitner@jku.at}\\

\noindent  Friedrich Pillichshammer, Institut f\"{u}r Finanzmathematik und Angewandte Zahlentheorie, Johannes Kepler Universit\"{a}t Linz, Altenbergerstr.~69, 4040 Linz, Austria.\\
\textbf{E-mail:} \texttt{friedrich.pillichshammer@jku.at}\\
\end{small}

\end{document}